\documentclass[12pt]{amsart}

\usepackage{amsmath,amsfonts,amsthm,amssymb}
\usepackage{mathtools}
\usepackage{geometry}
\usepackage{setspace}
\usepackage{hyperref}
\hypersetup{hidelinks}

\hypersetup{
  colorlinks=true,
  linkcolor=blue,
  citecolor=blue,
  urlcolor=blue
}

\newtheorem{theorem}{Theorem}[section]
\newtheorem{lemma}[theorem]{Lemma}
\newtheorem{proposition}[theorem]{Proposition}
\newtheorem{corollary}[theorem]{Corollary}
\theoremstyle{remark}
\newtheorem{remark}[theorem]{Remark}

\usepackage[T1]{fontenc}
\usepackage{lmodern}
\usepackage{microtype}
\usepackage{amsmath,amssymb,amsthm,mathtools}
\usepackage{enumitem}
\newcommand{\Area}{\operatorname{Area}}
\newcommand{\Ric}{\operatorname{Ric}}
\newcommand{\Hess}{\operatorname{Hess}}

\theoremstyle{plain}

\theoremstyle{definition}

\title[The second eigenvalue in $\mathbb{RP}^3$]
{A strict second-eigenvalue estimate for the scalar stability operator on surfaces in $\mathbb{RP}^{3}$}

\author{Márcio Batista and Abraão Mendes}
\address{CPMAT - Instituto de Matemática, Universidade Federal de Alagoas, Maceió, AL, 57072-970, Brazil}
\email{mhbs@mat.ufal.br}
\email{abraao.mendes@im.ufal.br }

\subjclass[2020]{Primary 58J50; Secondary 53C42, 35P15}
\keywords{Second eigenvalue, stability operator, immersed surface, real projective space, Veronese embedding, conformal geometry, rigidity}

\numberwithin{equation}{section}

\usepackage{cancel}
\usepackage[normalem]{ulem}

\begin{document}

\begin{abstract}
Let $\varphi:\Sigma^2\looparrowright\mathbb{RP}^3$ be a closed immersed surface, with no orientability or, equivalently, two-sidedness assumptions. We establish the sharp quantitative estimate
$$
\lambda_2\big(\Delta+|\sigma|^2+2\big)\leq2-\frac{2}{\Area(\Sigma)}\int_\Sigma H^2d\Sigma+\frac{4\pi\chi(\Sigma)}{\Area(\Sigma)}.
$$
Our main contribution is the analysis of the borderline case, which rules out equality when $\chi(\Sigma)\leq0$. More precisely,
$$
\lambda_2\big(\Delta+|\sigma|^2+2\big)<2\quad\text{whenever}\quad\chi(\Sigma)\leq0.
$$
The proof combines the canonical Veronese embedding of $\mathbb{RP}^3$ into $\mathbb{S}^8$, the conformal test function method, an Obata-type rigidity argument, and the classification of closed flat minimal surfaces in $\mathbb{RP}^3$.
\end{abstract}

\maketitle

\section{Introduction}\label{sec:introduction}

The spectrum of the Jacobi operator plays a central role in the study of the stability and rigidity of immersed submanifolds. For a two-sided immersed surface $\varphi:\Sigma^2\looparrowright(M^3,\overline g)$, with globally defined unit normal $N$, the Jacobi operator is 
$$
J=\Delta+|\sigma|^2+\Ric_{\overline g}(N,N),
$$
where $\Delta=\operatorname{div}\nabla$. Its negative eigenvalues determine the Morse index when the immersion is minimal and, more generally, describe the unstable directions for the second variation of area under the appropriate class of variations.

The use of conformal test functions to estimate eigenvalues goes back to Hersch's center-of-mass argument \cite{Hersch} and the conformal-volume method introduced by Li and Yau \cite{LiYau}. These ideas were subsequently adapted to Schrödinger operators and geometric stability problems. In particular, El Soufi and Ilias \cite{ElSoufiIlias2000} obtained general upper bounds for the second eigenvalue of Schrödinger operators in terms of conformal invariants and established sharp estimates for operators involving the squared norm of the second fundamental form in simply connected space forms. Their work extended (to arbitrary codimension and different ambient manifolds) earlier results of Harrell and Loss \cite{HarrellLoss} concerning the Laplacian on surfaces in $\mathbb{R}^3$ penalized by mean curvature.

For surfaces immersed in the unit three-sphere, a sharper picture emerges when the topology is prescribed. Urbano's index theorem \cite{Urbano} identifies the totally geodesic sphere and the Clifford torus as the only closed orientable minimal surfaces in $\mathbb{S}^3$ with Morse index at most five. More directly related to the present work, the second-named author~\cite{Mendes2019} proved that the Clifford torus maximizes the second eigenvalue of the Jacobi operator among all closed orientable immersed surfaces of positive genus in $\mathbb{S}^3$. More precisely, under the sign convention in which the eigenvalues of the Jacobi operator are defined by $Ju+\lambda u=0$, where $u\not\equiv0$, one has $\lambda_2(J)\le-2$, with equality if and only if the surface is congruent to the Clifford torus.

The projective case presents additional features: the ambient space is no longer simply connected, and closed embedded surfaces in $\mathbb{RP}^3$ need not be orientable (equivalently, they may be one-sided). For minimal hypersurfaces, stability and low-index questions in real projective spaces have been investigated by do Carmo, Ritoré, and Ros \cite{doCarmoRitoreRos}. Their results concern the geometric second-variation operator acting on sections of the normal line bundle. By contrast, our purpose is to study the second eigenvalue of the globally defined scalar Schrödinger operator $J=\Delta+|\sigma|^2+2$ acting on functions, for arbitrary closed immersed surfaces in $\mathbb{RP}^3$, without orientability or minimality assumptions.

Henceforth, let $\varphi:\Sigma^2\looparrowright\mathbb{RP}^3$ be an immersion of a closed connected surface, where $\mathbb{RP}^3$ is endowed with its standard metric $g_{\mathbb{RP}^3}$ of sectional curvature one. Locally, choose a unit normal vector field $N$, and let $\sigma$, $A$, and $H=\frac{1}{2}\operatorname{tr}_gA$ denote the second fundamental form, the shape operator, and the mean curvature of $\varphi$, respectively, while $K$ denotes the Gaussian curvature of the induced metric $g=\varphi^*g_{\mathbb{RP}^3}$.

When the immersion is two-sided, its stability operator is 
$$
J=\Delta+|\sigma|^2+\Ric(N,N)=\Delta+|\sigma|^2+2.
$$
If the immersion is one-sided, the geometric second-variation operator acts naturally on sections of the normal line bundle. Nevertheless, the potential $|\sigma|^2+\Ric(N,N)=|\sigma|^2+2$ is globally defined: replacing $N$ by $-N$ changes $A$ into $-A$ but leaves $|\sigma|^2$ unchanged. Thus $J$ defines a scalar Schrödinger operator on $C^\infty(\Sigma)$, independently of the triviality of the normal bundle. This is the analytic point of view adopted throughout the paper.

We denote by
$$
\lambda_1(J)<\lambda_2(J)\le\lambda_3(J)\le\cdots
$$
the eigenvalues of $J$, repeated according to their multiplicities. Our main result provides a quantitative upper bound for $\lambda_2(J)$ in terms of the Willmore energy and the topology of the surface.

\begin{theorem}\label{thm:main}
Let $\varphi:\Sigma^2\looparrowright\mathbb{RP}^3$ be a closed connected immersed surface. For the scalar operator $J=\Delta+|\sigma|^2+2$, we have the sharp estimate
\begin{equation}\label{eq:quantitative-main}
\lambda_2(J)\leq 2-\frac{2}{\Area(\Sigma,g)}\int_\Sigma H^2d\Sigma+\frac{4\pi\chi(\Sigma)}{\Area(\Sigma,g)}.
\end{equation}
Moreover, if $\chi(\Sigma)\leq0$, then $\lambda_2(J)<2$.
\end{theorem}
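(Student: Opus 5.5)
Following the abstract, we work with the Veronese embedding. Realize $\mathbb{RP}^3$ isometrically (after rescaling) inside $\mathbb{S}^8\subset\mathbb{R}^9$ via $x\mapsto \frac{1}{\sqrt2}\bigl(x x^T-\tfrac14 I\bigr)$, normalized so that sectional curvature one in $\mathbb{RP}^3$ corresponds to a minimal isometric immersion $\psi:\mathbb{RP}^3\to\mathbb{S}^8(r)$. Composing with $\varphi$ gives $\Phi=\psi\circ\varphi:\Sigma\to\mathbb{S}^8$. No orientability is needed, since only $\Phi$ and $|\sigma|^2$ enter.

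The first step is a conformal (Hersch/Li--Yau) test function argument. Let $u_1>0$ be the first eigenfunction of $J$. For each conformal diffeomorphism $\gamma_a$ of $\mathbb{S}^8$ ($a$ in the open ball $B^9$), a topological degree/Brouwer argument yields $a$ with $\int_\Sigma (\gamma_a\circ\Phi)\,u_1\,d\Sigma=0$. Each coordinate of $\gamma_a\circ\Phi$ is then admissible in the min-max for $\lambda_2$. Summing the nine Rayleigh quotients and using $\sum_i|\gamma_a\circ\Phi|_i^2=1$ gives
$$
\lambda_2(J)\Area(\Sigma)\le \int_\Sigma |\nabla(\gamma_a\circ\Phi)|^2-\int_\Sigma(|\sigma|^2+2).
$$
The Dirichlet energy on the surface is conformally invariant: $\int|\nabla(\gamma_a\circ\Phi)|^2=\int |d\gamma_a|^2_{\text{on }T\Sigma}\,d\Sigma$. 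We bound it pointwise by the usual conformal-volume computation, $|\nabla(\gamma_a\circ\Phi)|^2 = 2\,\mu_a^2$ with $\mu_a$ the conformal factor. Then we use $2\mu_a^2\le \mu_a^2\cdot(\ldots)$ together with the Willmore-type quantity $|H_\Phi|^2+1$ of $\Sigma\subset\mathbb{S}^8$, via the inequality $\int \mu_a^2(|\vec H_{\Phi}|^2+1)\,d\Sigma\cdot$ in the form $2\mu_a^2\le |\vec H_\Phi|^2+1+\ldots$. More precisely, I would use the pointwise identity $\int_\Sigma|\nabla(\gamma_a\circ\Phi)|^2=2\int_\Sigma \mu_a^2$ and the inequality $\mu_a^2\le |\vec H_\Phi|^2+1-\tfrac12|\mathring{\sigma}_\Phi|^2+$(divergence terms). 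This is the El Soufi--Ilias route, and it gives
$$
\lambda_2\Area\le\int_\Sigma\bigl(|\vec H_\Phi|^2+1\bigr)\cdot 2-\int(|\sigma|^2+2)\quad\text{(schematically)}.
$$

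The second step computes $|\vec H_\Phi|^2$ and $|\sigma_\Phi|^2$ through the second fundamental form of $\psi$. For an orthonormal frame $e_1,e_2$ of $T\Sigma$, $\vec H_\Phi=\vec H_\varphi+\tfrac12\sum_i \mathrm{II}_\psi(e_i,e_i)$. The Veronese second fundamental form is explicit, and $|\sum_i\mathrm{II}_\psi(e_i,e_i)|^2$ is constant on $2$-planes because $\psi$ is isotropic. Gauss' equation $2K=2+4H^2-|\sigma|^2$ then converts $|\sigma|^2$ into $H^2$ and $K$. Gauss--Bonnet, $\int K=2\pi\chi$, yields exactly
$$
2-\frac{2}{\Area}\int H^2+\frac{4\pi\chi}{\Area}.
$$
Matching the constants is routine but must be done carefully.

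The third step, the main obstacle, is the strict case. Suppose $\chi\le0$ and $\lambda_2=2$. Then every inequality in the chain is an equality, which forces $H\equiv0$ and $\chi=0$, with $\int H^2=0$. Equality in the Rayleigh quotients makes each coordinate of $\gamma_a\circ\Phi$ a $\lambda_2$-eigenfunction. Equality in the conformal-energy step forces $\gamma_a$ to be trivial (Obata-type: $a=0$), so the coordinates of $\Phi$ satisfy $J\Phi_i+2\Phi_i=0$. Combining this with $\Delta\Phi=2\vec H_\Phi$ forces $|\sigma|^2$ to be constant and determined. Gauss then gives $K$ constant, hence $K\equiv0$ since $\chi=0$. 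Thus $\Sigma$ is a closed flat minimal surface in $\mathbb{RP}^3$. By the classification, it is the projection of the Clifford torus (or of a flat torus/Klein bottle covered by it). For the Clifford torus $|\sigma|^2=2$, so $J=\Delta+4$ and $\lambda_2=4-\mu$, where $\mu$ is the first nonzero Laplace eigenvalue of the quotient. The quotient lattice gives $\mu>2$, whereas equality would require $\mu=2$. This contradiction is the delicate check, and it gives $\lambda_2<2$.
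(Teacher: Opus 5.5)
Your derivation of the quantitative estimate follows the paper: Veronese composition, Li--Yau balancing against the positive first eigenfunction, summing the nine Rayleigh quotients, then $\int_\Sigma|d\Psi|^2=2\Area(\Psi)\le2\int_\Sigma(1+|\mathbf H_\Phi|^2)\,d\widehat\Sigma=2\int_\Sigma(3+H^2)\,d\Sigma$, Gauss, and Gauss--Bonnet. Your pointwise bound on the conformal factor is garbled, but the integrated inequality is correct once the Willmore integral is taken with respect to the measure of $\widehat g=\frac{8}{3}g$.

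The genuine gap is in the equality case, at the claim that equality ``forces $\gamma_a$ to be trivial ($a=0$).'' You give no argument, and the claim is equivalent to what you are trying to prove. Since $|\mathbf H_\Phi|^2=\frac{3}{8}H^2+\frac{1}{8}\ge\frac{1}{8}$, the Veronese transform $\Phi$ is never minimal in $\mathbb{S}^8$. Equality, however, forces $\gamma_a\circ\Phi$ to be minimal. So any equality configuration must have $a\neq0$, and ``equality implies $a=0$'' just says that equality never occurs. Your subsequent chain is also inconsistent: with $a=0$, the radial component of $J\Phi+2\Phi=0$ gives $|\sigma|^2=\frac{4}{3}$, hence $K\equiv\frac{1}{3}$, not $K\equiv0$.

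The missing idea is how to use minimality of $F_a\circ X$ for a \emph{nontrivial} M\"obius map. The paper proceeds as follows:
\begin{itemize}
\item It compares $J\Psi+2\Psi=0$ with the harmonic-map equation $\Delta_g\Psi+|d\Psi|_g^2\Psi=0$ to get $e^{2\rho}=\frac{9}{8}-\widehat K$, where $e^{2\rho}=(1-|a|^2)^2/D^2$, $D=1+|a|^2+2h$ and $h=\langle a,X\rangle$.
\item The conformal transformation law gives $\mathbf H_X=(\nabla^{\mathbb{S}^8}\rho)^\perp=-\frac{2}{D}a^\perp$. Combined with $|\mathbf H_X|^2=\frac{1}{8}$, this shows that $f=h+\frac{1+|a|^2}{18}$ satisfies $|\widehat\nabla f|^2=C-\frac{9}{8}f^2$ and $\widehat\Delta f=-\frac{9}{4}f$, hence $\widehat\Hess f=-\frac{9}{8}f\,\widehat g$.
\item Obata's theorem excludes nonconstant $f$, since it would force $\widehat K\equiv\frac{9}{8}$, i.e.\ $e^{2\rho}\equiv0$.
\item A constant $f$ must vanish, so $h$, and hence $\widehat K$, is constant. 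Gauss--Bonnet with $\chi=0$ then gives $K\equiv0$.
\end{itemize}
That is where the Obata-type argument enters: it produces flatness, not $a=0$.

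Your closing spectral check also has to be fixed. With the convention $Ju+\lambda u=0$ and $J=\Delta+4$, the eigenvalues are $\lambda=\mu-4$, not $4-\mu$. Moreover, Lawson's classification makes $\Sigma$ a finite \emph{covering} of the projective Clifford torus $\overline{\mathbb{T}}_{\mathrm{Cl}}$, so $\Sigma$ is a torus and no Klein bottle occurs. Its first Laplace eigenvalue has no lower bound like $\mu>2$. The Clifford torus itself, a double cover of $\overline{\mathbb{T}}_{\mathrm{Cl}}$, has $\mu_1=2$, and suitable larger covers have arbitrarily small $\mu_1$. With your sign, the argument would therefore produce $\lambda_2=2$, or worse, exactly on these surfaces. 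The correct step pulls back an eigenfunction of $\overline{\mathbb{T}}_{\mathrm{Cl}}$ with eigenvalue $4$ to get $\mu_1(\Sigma)\le4$, whence $\lambda_2(J)=\mu_1(\Sigma)-4\le0<2$.
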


The estimate \eqref{eq:quantitative-main} is obtained by composing $\varphi$ with the canonical Veronese embedding $\psi:\mathbb{RP}^3\hookrightarrow\mathbb{S}^8$ and applying a Li--Yau center-of-mass normalization to the resulting spherical immersion. The coordinate functions of the balanced immersion serve as admissible test functions for $\lambda_2(J)$. The special extrinsic geometry of the Veronese embedding, combined with the Gauss equation and Gauss--Bonnet theorem, then produces the explicit right-hand side of \eqref{eq:quantitative-main}.

A substantial part of the proof is devoted to the strictness statement $\lambda_2(J)<2$. Indeed, when $\chi(\Sigma)<0$, strictness follows immediately from \eqref{eq:quantitative-main}, whereas the borderline case $\chi(\Sigma)=0$ requires a detailed analysis of the equality conditions. Equality forces the original immersion to be minimal in $\mathbb{RP}^3$ and the balanced Veronese transform to be minimal in $\mathbb{S}^8$. From the transformation law of the mean-curvature vector under Möbius transformations, we obtain a function $f$ satisfying the Obata-type equation
$$
\widehat\Hess f=-\frac{9}{8}f\mskip1mu\widehat g.
$$
The nonconstant case is excluded by Obata's rigidity theorem
\cite{Obata}. In the constant case, the equality identities and the
Gauss--Bonnet theorem force the original surface to be flat and minimal.
Lawson's local classification \cite[Corollary~3]{Lawson1969}
then shows that the immersion is a finite covering of the projective
Clifford torus. A direct computation of the scalar Jacobi operator's spectrum yields $\lambda_2(J)\leq0$, which rules out the equality case $\lambda_2(J)=2$ in Theorem~\ref{thm:main}.

\begin{remark}\label{rem:analytic}
For a one-sided immersion, the spectrum considered here is the spectrum of the globally defined \emph{scalar} operator $J$ acting on functions. It need not coincide with the geometric stability spectrum on sections of the normal line bundle. In particular, no passage to the normal double cover is required in our argument.
\end{remark}

{\bf The paper is organized as follows}. In Section~\ref{sec:preliminaries}, we recall the canonical Veronese embedding of $\mathbb{RP}^3$ into $\mathbb{S}^8$, its relevant extrinsic properties, and the conformal center-of-mass construction. In Section~\ref{sec:eigenvalue-estimate}, we use the balanced Veronese coordinates as test functions and prove the quantitative estimate \eqref{eq:quantitative-main}. Section~\ref{sec:equality-case} is devoted to the analysis of the equality conditions and the derivation of the aforementioned Obata-type equation. In Section~\ref{sec:flat-minimal}, we classify the resulting closed flat minimal immersions and compute the corresponding scalar Jacobi spectrum, thereby completing the proof of the strict inequality. Finally, in Section~\ref{sec:final-comments}, we discuss the sharpness of \eqref{eq:quantitative-main}.

\section{Preliminaries: the Veronese embedding and conformal balancing}\label{sec:preliminaries}

\subsection{The Veronese embedding} 

In this section, we recall the geometric properties of the canonical Veronese embedding that will be used throughout the paper. 

We identify $\mathbb{RP}^3$ with the space of unoriented lines in $\mathbb{R}^4$ and denote by $\operatorname{Sym}_0(4)$ the $9$-dimensional Euclidean space of trace-free symmetric $4\times4$ matrices, endowed with the Frobenius inner product
$$
\langle A,B\rangle=\operatorname{tr}(A^{\mathsf T}B)=\operatorname{tr}(AB).
$$

The canonical Veronese embedding is defined by
$$
\psi:\mathbb{RP}^3\hookrightarrow\mathbb{S}^8\subset\operatorname{Sym}_0(4),\qquad\psi([x])=\frac{2}{\sqrt{3}}\left(xx^{\mathsf T}-\frac{1}{4}I_4\right),
$$
where $x\in\mathbb{S}^3\subset\mathbb{R}^4$. Observe that $xx^{\mathsf T}$ is independent of the choice of representative of $[x]$. Moreover, direct computations show that
$$
\left|xx^{\mathsf T}-\frac{1}{4}I_4\right|^2=\operatorname{tr}\left(xx^{\mathsf T}-\frac{1}{4}I_4\right)^2=\frac{3}{4},
$$
and hence $\psi$ is well defined. 

If $v\in T_{[x]}\mathbb{RP}^3\simeq T_x\mathbb{S}^3=x^\perp$, then 
$$
d\psi_{[x]}(v)=\frac{2}{\sqrt{3}}\left(vx^{\mathsf T}+xv^{\mathsf T}\right).
$$
Since $v\perp x$, we obtain
$$
\left|vx^{\mathsf T}+xv^{\mathsf T}\right|^2=2|v|^2,
$$
and therefore
$$
\psi^*g_{\mathbb{S}^8}=\frac{8}{3}g_{\mathbb{RP}^3}.
$$

Straightforward calculations show that $\psi([x])=\psi([y])$ if and only if $x=\pm y$, that is, $[x]=[y]$. Thus $\psi$ is an isometric embedding of $\left(\mathbb{RP}^3,\frac{8}{3}g_{\mathbb{RP}^3}\right)$ into the unit sphere $\mathbb{S}^8$. This is the standard first-eigenfunction minimal immersion of $\mathbb{RP}^3$ into $\mathbb{S}^8$; see \cite{doCarmoWallach1971,Takahashi1966,Wallach1972}.

Let $B^\psi$ denote the second fundamental form of $\psi$. Let $v\in x^\perp$ be a unit vector with respect to $g_{\mathbb{RP}^3}$, and consider the geodesic $\gamma(t)=[\cos(t)x+\sin(t)v]$ in $\mathbb{RP}^3$. Reparametrizing $\gamma(t)$ so that the corresponding tangent vector $V=\gamma'(0)$ has unit length with respect to~$\frac{8}{3}g_{\mathbb{RP}^3}$,\linebreak a direct differentiation gives
$$
B^\psi(V,V)=\frac{\sqrt{3}}{2}\left(vv^{\mathsf T}-xx^{\mathsf T}\right)+\frac{2}{\sqrt{3}}\left(xx^{\mathsf T}-\frac{1}{4}I_4\right).
$$

Choosing an orthonormal basis of $\mathbb{R}^4$ whose first two elements are $x$ and $v$, respectively, we can see that the preceding matrix has eigenvalues
$$
0,\qquad\frac{1}{\sqrt{3}},\qquad-\frac{1}{2\sqrt{3}},\qquad-\frac{1}{2\sqrt{3}}.
$$
Hence,
\begin{equation}\label{eq:veronese-normal-curvature}
|B^\psi(V,V)|^2=\frac{1}{3}+2\left(\frac{1}{12}\right)=\frac{1}{2}.
\end{equation}
In particular, this quantity is independent of the unit vector $V$.

Now let $\varphi:\Sigma^2\looparrowright\mathbb{RP}^3$
be an immersed surface, let $g=\varphi^*g_{\mathbb{RP}^3}$ be the metric on $\Sigma$ induced from the
standard metric of sectional curvature one on $\mathbb{RP}^3$, and set $X=\psi\circ\varphi:\Sigma^2\looparrowright\mathbb{S}^8$. The metric on $\Sigma$ induced by $X$ is $\widehat g=X^*g_{\mathbb{S}^8}=\frac{8}{3}g$. 

Let $N$ be a local $g_{\mathbb{RP}^3}$-unit normal to $\varphi$, and put $\widehat N=\sqrt{\frac{3}{8}}N$. Then $\widehat N$ is unit with respect to $\frac{8}{3}g_{\mathbb{RP}^3}$. If $H$ denotes the scalar mean curvature of $\varphi$, with the averaged convention $H=\frac{1}{2}\operatorname{tr}_gA$, the mean-curvature vector of $\varphi$, considered as an immersion into $\left(\mathbb{RP}^3,\frac{8}{3}g_{\mathbb{RP}^3}\right)$, is $\widehat{\mathbf H}_{\varphi}=\frac{3}{8}HN=\sqrt{\frac{3}{8}}H\widehat N$, and hence
\begin{equation}\label{eq:scaled-mean-curvature}
|\widehat{\mathbf H}_{\varphi}|^2=\frac{3}{8}H^2.
\end{equation}

Let $\{E_1,E_2\}$ be a local $\widehat g$-orthonormal frame tangent to $\Sigma$. The composition formula for second fundamental forms gives
$$
B^X(E_i,E_j)=d\psi(B^\varphi(E_i,E_j))+B^\psi(E_i,E_j);
$$
see, for instance, \cite[Chapter~2]{Chen1973}. Taking the trace and using the minimality of $\psi$, we find
$$
\mathbf H_X=d\psi(\widehat{\mathbf H}_{\varphi})+\frac{1}{2}\sum_{i=1}^2B^\psi(E_i,E_i)=d\psi(\widehat{\mathbf H}_{\varphi})-\frac{1}{2}B^\psi(\widehat N,\widehat N\mskip1.5mu).
$$
Observe that the two terms in the last expression are orthogonal: the first belongs to $d\psi(T\mathbb{RP}^3)$, whereas the second is normal to $\psi(\mathbb{RP}^3)$ in $\mathbb{S}^8$. Therefore, it follows from \eqref{eq:veronese-normal-curvature} and \eqref{eq:scaled-mean-curvature} that
\begin{equation}\label{eq:HX}
|\mathbf H_X|^2=|\widehat{\mathbf H}_{\varphi}|^2+\frac{1}{4}|B^\psi(\widehat N,\widehat N\mskip1.5mu)|^2=\frac{3}{8}H^2+\frac{1}{8}.
\end{equation}

Since $\dim\Sigma=2$, the constant conformal change $\widehat g=\frac{8}{3}g$ also gives
\begin{equation}\label{eq:area-scaling}
d\widehat\Sigma=\frac{8}{3}d\Sigma.
\end{equation}
Combining \eqref{eq:HX} and \eqref{eq:area-scaling}, we finally obtain
\begin{equation}\label{eq:willmore-conversion}
\int_\Sigma(1+|\mathbf H_X|^2)d\widehat\Sigma
=\frac{8}{3}\int_\Sigma\Big(1+\frac{1}{8}+\frac{3}{8}H^2\Big)d\Sigma =
\int_\Sigma(3+H^2)d\Sigma.
\end{equation}

\subsection{Conformal balancing}

We shall use the following standard conformal balancing lemma, usually referred to as the Li--Yau center-of-mass argument; see \cite{ElSoufiIlias1986,LiYau}.

\begin{lemma}[Li--Yau conformal balancing]\label{lem:balance}
Let $Y:M\looparrowright\mathbb{S}^N$ be a smooth immersion of a closed manifold, and let $u\in C^\infty(M)$ be positive. Then there exists a conformal diffeomorphism $F:\mathbb{S}^N\to\mathbb{S}^N$ such that, writing $\Psi=F\circ Y=(\Psi_1,\ldots,\Psi_{N+1})$, one has
$$
\int_M u\Psi_\alpha\mskip1mu dM=0,
\qquad \alpha=1,\ldots,N+1,
$$
that is,
$$
\int_M u\Psi dM=0\quad\text{in}\quad\mathbb{R}^{N+1}.
$$
\end{lemma}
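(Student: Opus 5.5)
The balancing lemma is the classical Hersch/Li--Yau center-of-mass argument, and I would prove it by a fixed-point (Brouwer degree) argument on the closed unit ball $\overline{\mathbb{B}}^{N+1}$. For each $a\in\mathbb{B}^{N+1}$ (the open unit ball), let $F_a:\mathbb{S}^N\to\mathbb{S}^N$ be the standard conformal (Möbius) diffeomorphism
$$
F_a(y)=\frac{(1-|a|^2)\,y+(1+2\langle a,y\rangle+|y|^2)\,a}{1+2\langle a,y\rangle+|a|^2|y|^2},\qquad y\in\mathbb{S}^N,
$$
which restricts on the sphere to $F_a(y)=\frac{(1-|a|^2)y+2(1+\langle a,y\rangle)a}{1+2\langle a,y\rangle+|a|^2}$. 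One has $F_0=\mathrm{id}$ and $F_a(-a/|a|)=-a/|a|$. As $a\to p\in\mathbb{S}^N$, the maps $F_a$ converge to the constant map $p$ uniformly on compact subsets of $\mathbb{S}^N\setminus\{-p\}$. Define the weighted center of mass
$$
C(a)=\frac{1}{\int_M u\,dM}\int_M u\,(F_a\circ Y)\,dM\in\overline{\mathbb{B}}^{N+1}.
$$
This is continuous in $a\in\mathbb{B}^{N+1}$, since $F_a$ depends smoothly on $a$ and $M$ is compact.

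The key step is to show that $C$ extends continuously to $\overline{\mathbb{B}}^{N+1}$ by $C(p)=p$ for $p\in\mathbb{S}^N$. Take $a_k\to p$. Then $F_{a_k}\circ Y\to p$ pointwise off the set $Y^{-1}(-p)$, and the integrands are bounded by $1$ in norm. Since $Y$ is an immersion of a closed manifold, either $\dim M\ge 1$ and $Y^{-1}(-p)$ has measure zero, or $M$ is a finite set of points, and the measure $u\,dM$ has no atoms in the relevant sense. Dominated convergence then gives $C(a_k)\to p$.

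This is the main point needing care. A point mass of $u\,dM$ at $-p$ would destroy the extension, but $u\,dM$ is absolutely continuous with respect to the Riemannian measure on the positive-dimensional manifold $M$. The preimage of a point under an immersion is a discrete, hence finite, set, so it is null when $\dim M\ge1$. In the paper's application $\dim M=2$, so this holds.

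Now suppose, for contradiction, that $C(a)\neq0$ for all $a\in\overline{\mathbb{B}}^{N+1}$. Then $a\mapsto C(a)/|C(a)|$ is a continuous map $\overline{\mathbb{B}}^{N+1}\to\mathbb{S}^N$ whose restriction to the boundary is the identity. This is a retraction of the ball onto its boundary sphere, which is impossible by Brouwer's theorem. Hence $C(a)=0$ for some $a$, and this $a$ is necessarily interior, because $C(p)=p\neq0$ on the boundary. Setting $F=F_a$ and $\Psi=F\circ Y$ gives $\int_M u\Psi\,dM=0$, as required. All the ingredients are standard (see \cite{LiYau,ElSoufiIlias1986}), so I expect only the boundary continuity to need explicit verification.
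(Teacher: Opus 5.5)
Your argument is correct. It is the standard Hersch--Li--Yau center-of-mass argument: the weighted barycenter map extends continuously to the closed ball as the identity on $\mathbb{S}^N$, and Brouwer's no-retraction theorem then produces an interior zero. The paper uses exactly this argument, citing \cite{LiYau,ElSoufiIlias1986} rather than writing out a proof.

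One aside should be corrected. For finite $M$ the measure $u\,dM$ is a sum of point masses, and the lemma genuinely fails (for example, when $M$ is a single point). So the statement must be read with $\dim M\ge1$, which is all your dominated-convergence step uses and all the paper needs.
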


We also recall the conformal invariance of the spherical Willmore functional. Let $Y:\Sigma^2\looparrowright\mathbb{S}^N$ be an immersion of a closed surface, and let $\mathbf H_Y$ denote its mean-curvature vector in the unit sphere. For every conformal diffeomorphism $F:\mathbb{S}^N\to\mathbb{S}^N$, one has
$$
\int_\Sigma\big(1+|\mathbf H_{F\circ Y}|^2\big)d\Sigma_{F\circ Y}=\int_\Sigma\big(1+|\mathbf H_Y|^2\big)d\Sigma_Y;
$$
see \cite{LiYau}. Since
$$
\Area(F\circ Y)\leq\int_\Sigma\big(1+|\mathbf H_{F\circ Y}|^2\big)d\Sigma_{F\circ Y},
$$
it follows that
\begin{equation}\label{eq:conformal-area}
\Area(F\circ Y)\leq\int_\Sigma\big(1+|\mathbf H_Y|^2\big)d\Sigma_Y.
\end{equation}
Moreover, equality in \eqref{eq:conformal-area} holds if and only if $\mathbf H_{F\circ Y}\equiv0$, that is, if and only if $F\circ Y$ is minimal in $\mathbb{S}^N$. By a slight abuse of notation, $\Area(F\circ Y)$ denotes the area of $\Sigma$ with respect to the metric induced from $\mathbb{S}^N$ by $F\circ Y$.

\section{The second-eigenvalue estimate}\label{sec:eigenvalue-estimate}

In this section, we prove that, under the assumptions of Theorem~\ref{thm:main}, inequality \eqref{eq:quantitative-main} holds. We also derive a corollary that will be used in the analysis of the equality case in the next section.

\begin{proof}[Proof of inequality \eqref{eq:quantitative-main}]

Let $u>0$ be a first eigenfunction of $J$:
$$
Ju+\lambda_1(J)u=0.
$$
We apply Lemma~\ref{lem:balance} to $X=\psi\circ\varphi$ with weight function $u$. Thus, for some conformal diffeomorphism $F$ of $\mathbb{S}^8$, the map $\Psi=F\circ X=(\Psi_1,\ldots,\Psi_9)$ satisfies
$$
\int_\Sigma u\Psi_\alpha\mskip1mu d\Sigma=0\quad\text{for every}\quad\alpha.
$$
Each coordinate function $\Psi_\alpha$ is therefore admissible in the variational characterization of~$\lambda_2(J)$. Hence,
$$
\lambda_2(J)\int_\Sigma\Psi_\alpha^2\mskip1mu d\Sigma\le\int_\Sigma\big(|\nabla\Psi_\alpha|^2-(|\sigma|^2+2)\Psi_\alpha^2\big)d\Sigma.
$$
Summing over $\alpha$ and using $\sum_\alpha\Psi_\alpha^2=1$, we obtain
\begin{equation}\label{eq:rayleigh-sum}
\lambda_2(J)\Area(\Sigma,g)\le\int_\Sigma|d\Psi|_g^2\mskip1mu d\Sigma-\int_\Sigma(|\sigma|^2+2)d\Sigma.
\end{equation}

Since $\Psi$ is conformal and $\dim\Sigma=2$, 
$$
\int_\Sigma|d\Psi|_g^2\mskip1mu d\Sigma=2\Area(\Psi).
$$
Equations \eqref{eq:willmore-conversion} and \eqref{eq:conformal-area} then imply
$$
\Area(\Psi)\le\int_\Sigma(1+|\mathbf H_X|^2)d\widehat\Sigma=\int_\Sigma(3+H^2)d\Sigma.
$$
Inserting this into \eqref{eq:rayleigh-sum} yields
\begin{equation}\label{eq:before-gauss}
\lambda_2(J)\Area(\Sigma,g)\le2\int_\Sigma(3+H^2)d\Sigma-\int_\Sigma(|\sigma|^2+2)d\Sigma.
\end{equation}

On the other hand, the Gauss equation for $\Sigma$ in $\mathbb{RP}^3$ is 
$$
K=1+\det A.
$$
Since 
$$
|\sigma|^2=4H^2-2\det A,
$$
we have
$$
|\sigma|^2+2=4H^2+4-2K.
$$

It follows from \eqref{eq:before-gauss} that
$$
\lambda_2(J)\Area(\Sigma,g)\le2\Area(\Sigma,g)-2\int_\Sigma H^2d\Sigma
+2\int_\Sigma Kd\Sigma.
$$
The Gauss--Bonnet theorem, which is also valid for nonorientable surfaces, gives\linebreak $\int_\Sigma Kd\Sigma=2\pi\chi(\Sigma)$. Dividing both sides of the last inequality by the area proves~\eqref{eq:quantitative-main}.
\end{proof}

\begin{corollary}\label{cor:nonstrict}
Under the assumptions of Theorem~\ref{thm:main}, if $\chi(\Sigma)\le0$, then
$\lambda_2(J)\le2$. Moreover, equality holds only if
\begin{equation}\label{eq:first-equality}
H\equiv0,\qquad\chi(\Sigma)=0,
\end{equation}
and the balanced map $\Psi=F\circ X$ is minimal in $\mathbb{S}^8$.
\end{corollary}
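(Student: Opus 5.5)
The plan is to read the corollary off the chain of inequalities just established in the proof of \eqref{eq:quantitative-main}, keeping track of where each one can be strict. The first part is immediate. If $\chi(\Sigma)\le0$, then on the right-hand side of \eqref{eq:quantitative-main} the term $-\frac{2}{\Area(\Sigma,g)}\int_\Sigma H^2d\Sigma$ is nonpositive and $\frac{4\pi\chi(\Sigma)}{\Area(\Sigma,g)}$ is nonpositive. Hence $\lambda_2(J)\le2$.

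For the equality statement, suppose $\lambda_2(J)=2$ and $\chi(\Sigma)\le0$. Then
$$
2\le 2-\frac{2}{\Area(\Sigma,g)}\int_\Sigma H^2d\Sigma+\frac{4\pi\chi(\Sigma)}{\Area(\Sigma,g)}\le 2,
$$
so both nonpositive terms must vanish. This gives $\int_\Sigma H^2d\Sigma=0$, hence $H\equiv0$ by continuity, and also $\chi(\Sigma)=0$. This establishes \eqref{eq:first-equality}.

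Next I will trace equality back through the derivation. After the Gauss equation and Gauss--Bonnet, every step from \eqref{eq:before-gauss} to \eqref{eq:quantitative-main} is an identity. So equality in \eqref{eq:quantitative-main} forces equality in \eqref{eq:before-gauss}. That inequality was obtained by adding two inequalities: the summed Rayleigh bound \eqref{eq:rayleigh-sum}, and the bound $2\Area(\Psi)\le 2\int_\Sigma(3+H^2)d\Sigma$ coming from \eqref{eq:conformal-area} and \eqref{eq:willmore-conversion}. Each of these goes in the same direction, so equality in the sum forces equality in both. In particular $\Area(\Psi)=\int_\Sigma(1+|\mathbf H_X|^2)d\widehat\Sigma$. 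By the equality characterization stated after \eqref{eq:conformal-area}, this holds precisely when $\mathbf H_{F\circ X}\equiv0$, so $\Psi$ is minimal in $\mathbb{S}^8$.

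The only point needing care is this last decoupling: an equality in a sum of inequalities with the same orientation yields equality in each summand. That is routine here. The conformal identity $\int_\Sigma|d\Psi|_g^2d\Sigma=2\Area(\Psi)$ is an exact equality in dimension two, so no slack hides there. Optionally, one can also note that equality in \eqref{eq:rayleigh-sum} means each $\Psi_\alpha$ is a $\lambda_2$-eigenfunction. The corollary does not require this, but it will be useful in the next section.
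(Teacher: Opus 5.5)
Your proposal is correct and follows the same route as the paper. The two correction terms in \eqref{eq:quantitative-main} are nonpositive, which gives $\lambda_2(J)\le2$; their vanishing in the equality case gives \eqref{eq:first-equality}; and equality must then carry back to \eqref{eq:conformal-area}, forcing $\Psi$ to be minimal. You check explicitly that the steps after \eqref{eq:before-gauss} are identities and that equality there splits into equality in both \eqref{eq:rayleigh-sum} and the area bound, which spells out what the paper's short proof leaves implicit.
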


\begin{proof}
The first assertion follows immediately from \eqref{eq:quantitative-main}. If $\lambda_2(J)=2$, both nonpositive correction terms in that inequality vanish, which gives \eqref{eq:first-equality}. Equality must also hold in \eqref{eq:conformal-area} for $Y=X$, hence $\Psi$ is minimal.
\end{proof}

\section{The equality system}\label{sec:equality-case}

From now on, we assume that $\chi(\Sigma)\le0$ and $\lambda_2(J)=2$. By Corollary~\ref{cor:nonstrict}, the original immersion $\varphi:\Sigma^2\looparrowright\mathbb{RP}^3$ is minimal and $\chi(\Sigma)=0$. In particular, by \eqref{eq:HX},
$$
|\mathbf H_X|^2=\frac{1}{8},
$$
where $X=\psi\circ\varphi:\Sigma^2\looparrowright\mathbb{S}^8$.

Every conformal diffeomorphism of the round sphere is, up to composition with an isometry of $\mathbb{S}^8$, represented by a vector in the open unit ball $B^9=\{a\in\mathbb{R}^9:|a|<1\}$. More precisely, the noncompact part of the conformal group of $\mathbb{S}^8$ is described by the family
$$
F_a:\mathbb{S}^8\to\mathbb{S}^8,\qquad a\in B^9,
$$
defined by
\begin{equation}\label{eq:Fa}
F_a(x)=\frac{(1-|a|^2)x+2(1+\langle a,x\rangle)a}{1+|a|^2+2\langle a,x\rangle}.
\end{equation}
Thus every $F\in\operatorname{Conf}(\mathbb{S}^8)$ can be written in the form
$$
F=R\circ F_a,\qquad R\in O(9),\qquad a\in B^9.
$$
This is the standard ball parametrization of the Möbius group of the sphere; see \cite{ElSoufiIlias1986,LiYau}.

For later use, set
$$
r=|a|,\qquad h_a(x)=\langle a,x\rangle,\qquad D_a(x)=1+r^2+2h_a(x).
$$

We next compute the conformal factor of $F_a$. Let $v\in T_x\mathbb{S}^8$, so that $\langle x,v\rangle=0$, and put $q=\langle a,v\rangle$. Differentiating \eqref{eq:Fa}, we obtain
$$
d(F_a)_x(v)=\frac{(1-r^2)v+2qa}{D_a(x)}-\frac{2q}{D_a(x)}F_a(x).
$$

Using $|F_a(x)|^2=1$ and the defining formula for $F_a$, a direct simplification yields
$$
\big\langle d(F_a)_x(v),d(F_a)_x(w)\big\rangle=\frac{(1-r^2)^2}{D_a(x)^2}\langle v,w\rangle
$$
for all $v,w\in T_x\mathbb{S}^8$. Hence,
$$
F_a^*g_{\mathbb{S}^8}=e^{2\rho_a}g_{\mathbb{S}^8},\qquad e^{2\rho_a(x)}=\frac{(1-r^2)^2}{(1+r^2+2\langle a,x\rangle)^2}.
$$
Equivalently,
\begin{equation}\label{eq:rho-log}
\rho_a(x)=\log(1-r^2)-\log(1+r^2+2\langle a,x\rangle).
\end{equation}

Applying this to the immersion $X:\Sigma^2\looparrowright\mathbb{S}^8$, let
$$
h=\langle a,X\rangle,\qquad D=1+r^2+2h,\qquad e^{2\rho}=\frac{(1-r^2)^2}{D^2}.
$$
Then, if $\widehat g=X^*g_{\mathbb{S}^8}$, the metric induced on $\Sigma$ by $F_a\circ X$ is
$$
g_{F_a\circ X}=(F_a\circ X)^*g_{\mathbb{S}^8}=e^{2\rho}\widehat g.
$$
Since $\dim\Sigma=2$, the corresponding area element satisfies
$$
d\Sigma_{F_a\circ X}=e^{2\rho}d\widehat\Sigma=\frac{(1-r^2)^2}{D^2}d\widehat\Sigma.
$$

\subsection{The conformal factor and curvature}

Equality in the sum of the coordinate Rayleigh inequalities implies that equality holds in each individual inequality. Indeed, each of the corresponding nine Rayleigh defects is nonnegative, and their sum vanishes. Hence,
$$
J\Psi_\alpha+2\Psi_\alpha=0,\qquad \alpha=1,\ldots,9.
$$
Equivalently, regarding $\Psi=(\Psi_1,\ldots,\Psi_9)$ as an $\mathbb{R}^9$-valued map,
\begin{equation}\label{eq:vector-eigen}
J\Psi+2\Psi=0.
\end{equation}

By the equality statement in the conformal-area estimate, $\Psi=F_a\circ X$ is a minimal immersion into the unit sphere. Moreover, as
$$
\Psi^*g_{\mathbb{S}^8}=e^{2\rho}\widehat g=\frac{8}{3}e^{2\rho}g,
$$
the immersion $\Psi:(\Sigma^2,g)\looparrowright\mathbb{S}^8$ is conformal. Since every conformal minimal immersion of a surface into a Riemannian manifold is harmonic, it follows that, regarding $\Psi$ as an $\mathbb{R}^9$-valued map, we have
\begin{equation}\label{eq:harmonic-Psi}
\Delta_g\Psi+|d\Psi|_g^2\mskip1mu\Psi=0.
\end{equation}
Taking the trace with respect to $g$ in the pullback-metric identity yields
\begin{equation}\label{eq:energy-Psi}
|d\Psi|_g^2=\operatorname{tr}_g(\Psi^*g_{\mathbb{S}^8})=2\left(\frac{8}{3}e^{2\rho}\right)
=\frac{16}{3}e^{2\rho}.
\end{equation}

On the other hand, using 
$$
J=\Delta_g+|\sigma|^2+2
$$
into \eqref{eq:vector-eigen}, we obtain
\begin{equation}\label{eq:Psi-potential}
\Delta_g\Psi+(|\sigma|^2+4)\Psi=0.
\end{equation}
Since the original immersion $\varphi:\Sigma^2\looparrowright\mathbb{RP}^3$ is minimal, the Gauss equation in $\mathbb{RP}^3$, whose sectional curvature is one, gives $K=1-\frac{1}{2}|\sigma|^2$, that is, $|\sigma|^2=2-2K$. Hence \eqref{eq:Psi-potential} becomes
\begin{equation}\label{eq:Psi-curvature}
\Delta_g\Psi+(6-2K)\Psi=0.
\end{equation}

Comparing \eqref{eq:harmonic-Psi}, \eqref{eq:energy-Psi}, and \eqref{eq:Psi-curvature}, and using $|\Psi|^2=1$, we find
$$
\frac{16}{3}e^{2\rho}=6-2K.
$$
Therefore,
\begin{equation}\label{eq:rho-K}
e^{2\rho}=\frac{3}{8}(3-K).
\end{equation}

Since $\widehat g=\frac{8}{3}g$, the Gaussian curvature transforms under this constant rescaling according to $\widehat K=\frac{3}{8}K$. Substituting this into \eqref{eq:rho-K}, we obtain
\begin{equation}\label{eq:rho-curvature}
e^{2\rho}=\frac{3}{8}(3-K)=\frac{9}{8}-\widehat K.
\end{equation}

Finally, the explicit conformal factor of $F_a$ is
$$
e^{2\rho}=\frac{(1-r^2)^2}{(1+r^2+2h)^2},\qquad r=|a|,\qquad h=\langle a,X\rangle.
$$
Combining this identity with \eqref{eq:rho-curvature} yields
\begin{equation}\label{eq:K-h}
\widehat K=\frac{9}{8}-\frac{(1-r^2)^2}{(1+r^2+2h)^2}.
\end{equation}

\subsection{Minimality after the Möbius transformation}\label{subsec:Mobius}

By equality in the conformal-area estimate, $\Psi=F_a\circ X$ is minimal in $\mathbb{S}^8$. We now use the transformation law for the mean-curvature vector under an ambient
conformal change. If $F_a^*g_{\mathbb{S}^8}=e^{2\rho}g_{\mathbb{S}^8}$, then
$$
\mathbf H_{F_a\circ X}=e^{-2\rho}dF_a\big(\mathbf H_X-(\nabla^{\mathbb{S}^8}\rho)^\perp\big).
$$
Since $\mathbf H_{F_a\circ X}=0$, it follows that $\mathbf H_X=(\nabla^{\mathbb{S}^8}\rho)^\perp$. Here and below, the normal projection is taken with respect to $X(\Sigma)\subset\mathbb{S}^8$.

From \eqref{eq:rho-log}, restricted along $X$, we have
$$
\rho=\log(1-r^2)-\log D,\qquad D=1+r^2+2h,\qquad h=\langle a,X\rangle.
$$
Since $\nabla^{\mathbb{S}^8}h=a-hX$, we obtain $\nabla^{\mathbb{S}^8}\rho=-\frac{2}{D}(a-hX)$. So, because $X(p)$ is radial and therefore orthogonal to $T_{X(p)}\mathbb{S}^8$ for each $p\in\Sigma$, this gives
\begin{equation}\label{eq:HX-aperp}
\mathbf H_X=-\frac{2}{D}a^\perp.
\end{equation}

The original immersion $\varphi:\Sigma^2\looparrowright\mathbb{RP}^3$ is minimal. Thus, by \eqref{eq:HX}, $|\mathbf H_X|^2=\frac{1}{8}$. Combining this identity with \eqref{eq:HX-aperp}, we find 
\begin{equation}\label{eq:aperp}
|a^\perp|^2=\frac{D^2}{32}.
\end{equation}

On the other hand, the orthogonal decomposition of the constant vector $a\in\mathbb{R}^9$ along $X(\Sigma)\subset\mathbb{S}^8$ is
$$
a=hX+\widehat\nabla h+a^\perp.
$$
Thus,
$$
r^2=h^2+|\widehat\nabla h|^2+|a^\perp|^2.
$$
Using \eqref{eq:aperp} and $D=1+r^2+2h$, we obtain
\begin{align}
|\widehat\nabla h|^2&=r^2-\frac{D^2}{32}-h^2\notag\\
&=r^2-\frac{(1+r^2)^2}{32}-\frac{1+r^2}{8}h-\frac{9}{8}h^2\notag\\
&=r^2-\frac{(1+r^2)^2}{36}-\frac{9}{8}\left(h+\frac{1+r^2}{18}\right)^2.
\label{eq:transnormal-h}
\end{align}

For an immersed surface in the unit sphere, with the averaged convention
for the mean-curvature vector,
$$
\widehat\Delta X=-2X+2\mathbf H_X.
$$
Thus, taking the Euclidean scalar product with the constant vector $a$ and
using \eqref{eq:HX-aperp} and \eqref{eq:aperp}, gives
\begin{align}
\widehat\Delta h&=-2h+2\langle a,\mathbf H_X\rangle=-2h-\frac{4}{D}|a^\perp|^2\notag\\
&=-2h-\frac{D}{8}=-\frac{9}{4}h-\frac{1+r^2}{8}\notag\\
&=-\frac{9}{4}\left(h+\frac{1+r^2}{18}\right).
\label{eq:lap-h}
\end{align}

Defining
\begin{equation}\label{eq:def-f}
f=h+\frac{1+r^2}{18}
\end{equation}
and substituting this into \eqref{eq:transnormal-h} and \eqref{eq:lap-h} yields
\begin{equation}\label{eq:isoparametric}
|\widehat\nabla f|^2=C_r-\frac{9}{8}f^2,\qquad\widehat\Delta f=-\frac{9}{4}f,
\end{equation}
where 
$$
C_r=r^2-\frac{(1+r^2)^2}{36}.
$$

\begin{proposition}\label{prop:obata-equation}
The function $f$ defined by \eqref{eq:def-f} satisfies
\begin{equation}\label{eq:obata}
\widehat\Hess f=-\frac{9}{8}f\mskip1mu\widehat g\quad\mbox{on}\quad\Sigma.
\end{equation}
\end{proposition}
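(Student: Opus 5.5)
The plan is to compute $\widehat\Hess h$ directly from the extrinsic geometry of $X=\psi\circ\varphi$ in $\mathbb{S}^8\subset\mathbb{R}^9$. Then I will show that the only non-umbilic contribution pairs with $\mathbf H_X$ in a way that is automatically proportional to $\widehat g$. Since $f$ differs from $h$ by a constant, $\widehat\Hess f=\widehat\Hess h$, so it suffices to prove
$$
\widehat\Hess h=-\Big(\tfrac{9}{8}h+\tfrac{1+r^2}{16}\Big)\widehat g,
$$
because $\tfrac98\cdot\tfrac{1+r^2}{18}=\tfrac{1+r^2}{16}$.

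\textbf{Step 1: the Hessian of a linear function.} For the linear function $h=\langle a,X\rangle$ on a surface in the unit sphere, the standard formula is
$$
\widehat\Hess h(Y,Z)=-h\,\widehat g(Y,Z)+\langle B^X(Y,Z),a\rangle=-h\,\widehat g(Y,Z)+\langle B^X(Y,Z),a^\perp\rangle .
$$
By \eqref{eq:HX-aperp}, $a^\perp=-\frac D2\mathbf H_X$. Hence it remains to show that $\langle B^X(E_i,E_j),\mathbf H_X\rangle=\frac18\delta_{ij}$ for a local $\widehat g$-orthonormal frame $\{E_1,E_2\}$. Granting this, we get $\widehat\Hess h=-(h+\frac{D}{16})\widehat g$, and substituting $D=1+r^2+2h$ gives exactly the required expression.

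\textbf{Step 2: reduce to the second fundamental form of $\psi$.} Since $\varphi$ is minimal, the computation preceding \eqref{eq:HX} gives $\mathbf H_X=-\frac12B^\psi(\widehat N,\widehat N)$. The composition formula gives $B^X(E_i,E_j)=d\psi(B^\varphi(E_i,E_j))+B^\psi(E_i,E_j)$. The first term is tangent to $\psi(\mathbb{RP}^3)$ and $\mathbf H_X$ is normal to it, so
$$
\langle B^X(E_i,E_j),\mathbf H_X\rangle=-\tfrac12\langle B^\psi(E_i,E_j),B^\psi(\widehat N,\widehat N)\rangle .
$$
The task is therefore reduced to an algebraic statement about $B^\psi$ on the orthonormal triple $E_1,E_2,\widehat N$.

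\textbf{Step 3: the algebraic identities for $B^\psi$ (the main point).} I will use two facts.
\begin{enumerate}[label=(\roman*)]
\item Isotropy: by \eqref{eq:veronese-normal-curvature}, $|B^\psi(V,V)|^2=\frac12|V|^4$. Fully polarizing this quartic form gives, for orthonormal $U,V,W$,
$$
\langle B^\psi(U,U),B^\psi(V,V)\rangle+2|B^\psi(U,V)|^2=\tfrac12,\qquad \langle B^\psi(U,V),B^\psi(W,W)\rangle+2\langle B^\psi(U,W),B^\psi(V,W)\rangle=0 .
$$
\item The Gauss equation of $\psi$: the ambient $(\mathbb{RP}^3,\frac83 g_{\mathbb{RP}^3})$ has constant curvature $\frac38$ and $\mathbb{S}^8$ has curvature $1$. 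This gives $\langle B^\psi(U,U),B^\psi(V,V)\rangle-|B^\psi(U,V)|^2=-\frac58$. Since the curvature tensor of a space form has no mixed components, it also gives $\langle B^\psi(U,V),B^\psi(W,W)\rangle-\langle B^\psi(U,W),B^\psi(V,W)\rangle=0$.
\end{enumerate}
Solving the resulting linear systems yields $|B^\psi(U,V)|^2=\frac38$ and $\langle B^\psi(U,U),B^\psi(V,V)\rangle=-\frac14$. It also yields $\langle B^\psi(U,V),B^\psi(W,W)\rangle=0$.

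Applied with $W=\widehat N$ and $U,V\in\{E_1,E_2\}$, these give $\langle B^\psi(E_i,E_j),B^\psi(\widehat N,\widehat N)\rangle=-\frac14\delta_{ij}$. Hence $\langle B^X(E_i,E_j),\mathbf H_X\rangle=\frac18\delta_{ij}$, which closes Step 1.

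\textbf{Consistency check and main obstacle.} Tracing the resulting identity recovers $\widehat\Delta f=-\frac94 f$ from \eqref{eq:isoparametric}, which serves as a check on the constants. The step I expect to need the most care is Step 3, in particular the vanishing of the mixed term $\langle B^\psi(E_1,E_2),B^\psi(\widehat N,\widehat N)\rangle$. This needs both the polarized isotropy relation and the Gauss equation, with the scalings $\frac83$ and $\frac38$ tracked correctly. If that route gave trouble, I would verify it instead by the explicit matrix model used for \eqref{eq:veronese-normal-curvature}, in an orthonormal basis $x,e_1,e_2,n$ of $\mathbb{R}^4$.
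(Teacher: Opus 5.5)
Your proposal is correct, but it takes a different route from the paper. The paper never computes $\widehat\Hess h$ extrinsically. It first turns $|\mathbf H_X|^2=\frac18$ and $\mathbf H_X=-\frac2D a^\perp$ into the two scalar identities \eqref{eq:isoparametric}. It then differentiates $|\widehat\nabla f|^2=C_r-\frac98 f^2$ to get $\widehat\Hess f(e_1,e_1)=-\frac98 f$ and $\widehat\Hess f(e_1,e_2)=0$ on the regular set $\mathcal R_f$, and recovers $\widehat\Hess f(e_2,e_2)$ from the trace $\widehat\Delta f=-\frac94 f$. Finally it extends to all of $\Sigma$ by density of $\mathcal R_f$ (via unique continuation) and continuity, handling constant $f$ separately.

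You instead combine $\widehat\Hess h=-h\,\widehat g+\langle B^X,a^\perp\rangle$ with the pointwise fact that $\langle B^X(E_i,E_j),\mathbf H_X\rangle=|\mathbf H_X|^2\delta_{ij}$. In other words, $\psi\circ\varphi$ is pseudo-umbilical whenever $\varphi$ is minimal. Your constants check out: $|B^\psi(U,V)|^2=\frac38$, $\langle B^\psi(U,U),B^\psi(V,V)\rangle=-\frac14$, and the mixed terms vanish. The explicit model confirms this, since for Euclidean-orthonormal $u,w\in x^\perp$, rescaled to $\frac83 g_{\mathbb{RP}^3}$-unit vectors $\hat u,\hat w$, one gets $B^\psi(\hat u,\hat w)=\frac{\sqrt3}{4}(uw^{\mathsf T}+wu^{\mathsf T})$.

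The trade-off is as follows. Your route needs finer information about the Veronese embedding: the full tensor $\langle B^\psi(\cdot,\cdot),B^\psi(\cdot,\cdot)\rangle$, which you obtain from isotropy plus the Gauss equation of $\psi$. The paper uses only the scalar fact $|B^\psi(V,V)|^2=\frac12$. In exchange, your proof is purely pointwise. It needs no regular-set or unique-continuation argument and no case split, since constant $f$ immediately forces $f=0$ from the Hessian identity. It also avoids the step where the paper uses $\dim\Sigma=2$ to read off the second Hessian eigenvalue from the trace.
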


\begin{proof}
If $f$ is constant, the second equation in \eqref{eq:isoparametric} gives $f\equiv0$, and \eqref{eq:obata} follows immediately.

Assume that $f$ is nonconstant and let $\mathcal R_f=\{p\in\Sigma:\widehat\nabla f(p)\neq0\}$ be the regular set of $f$. On $\mathcal R_f$, choose a local $\widehat g$-orthonormal frame $\{e_1,e_2\}$ such that $e_1=\frac{\widehat\nabla f}{|\widehat\nabla f|}$. Differentiating the transnormal identity in \eqref{eq:isoparametric} gives
$$
2\mskip1mu\widehat\Hess f(\mskip1mu\widehat\nabla f,\mskip2mu\cdot\mskip2mu)=-\frac{9}{4}f\mskip.5mu df.
$$
Therefore, it follows that
$$
\widehat\Hess f(e_1,e_1)=-\frac{9}{8}f,\qquad\widehat\Hess f(e_1,e_2)=0.
$$
Thus, taking the trace of $\widehat\Hess f$ and using $\widehat\Delta f=-\frac{9}{4}f$, we obtain
$$
\widehat\Hess f(e_2,e_2)=\widehat\Delta f-\widehat\Hess f(e_1,e_1)=-\frac{9}{8}f.
$$
Hence \eqref{eq:obata} holds on $\mathcal R_f$.

We claim that $\mathcal R_f$ is dense in $\Sigma$. Indeed, if $\widehat\nabla f=0$ on a nonempty open set $U\subset\Sigma$, then $f$ is constant on each connected component of $U$. Since $\widehat\Delta f=0$ on $U$, the equation $\widehat\Delta f+\frac{9}{4}f=0$ implies that $f=0$ on $U$. The unique continuation property for Laplace eigenfunctions then yields $f\equiv0$ on $\Sigma$, contradicting the assumption that $f$ is nonconstant.

Finally, \eqref{eq:obata} extends from $\mathcal R_f$ to all of $\Sigma$ by continuity.
\end{proof}

\section{Exclusion of equality}\label{sec:flat-minimal}

\subsection{Conclusion of the equality case}

We now distinguish whether the function $f$ defined in \eqref{eq:def-f} is constant or not.

Suppose first that $f$ is nonconstant. By Proposition~\ref{prop:obata-equation} and Obata's theorem \cite{Obata}, the closed Riemannian surface $(\Sigma,\widehat g)$ is isometric to the round sphere of Gaussian curvature $\widehat K\equiv\frac{9}{8}$, which is incompatible with \eqref{eq:rho-curvature}: 
$$
e^{2\rho}=\frac{9}{8}-\widehat K\equiv0.
$$
Hence $f$ must be constant.

The second equation in \eqref{eq:isoparametric} then yields $f\equiv0$. So, by \eqref{eq:def-f}, the height function $h$ is constant $h\equiv-\frac{1+r^2}{18}$. It follows from \eqref{eq:K-h} that $\widehat K$ is constant. Since $\chi(\Sigma)=0$, the Gauss--Bonnet theorem gives 
$$
0=2\pi\chi(\Sigma)=\int_\Sigma\widehat Kd\widehat\Sigma=\widehat K\Area(\Sigma,\widehat g).
$$
Therefore $\widehat K\equiv0$. Since $\widehat g=\frac{8}{3}g$, we have $K=\frac{8}{3}\widehat K\equiv0$. Thus $\varphi:\Sigma^2\looparrowright\mathbb{RP}^3$ is a closed flat minimal immersion.

\begin{lemma}\label{lem:flat-minimal}
Let $\varphi:\Sigma^2\looparrowright\mathbb{RP}^3$ be a closed flat minimal immersion. Then:
\begin{enumerate}[label=\textup{(\roman*)}]
\item $\Sigma$ is orientable;
\item $\varphi$ is a finite covering of the projective Clifford torus;
\item for $J=\Delta+|\sigma|^2+2$, one has $\lambda_2(J)\leq0$. 
\end{enumerate}
\end{lemma}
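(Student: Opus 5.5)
We work locally first. For a minimal immersion $\varphi$ with $K\equiv0$, the Gauss equation $K=1+\det A$ gives $\det A\equiv-1$. Since $\operatorname{tr}A=0$, the principal curvatures are $\pm1$ wherever a local unit normal is chosen, so $|\sigma|^2\equiv2$. The Codazzi equations then force the principal foliations to be parallel, and we are in the situation of Lawson's local classification \cite[Corollary~3]{Lawson1969}.

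\textbf{Lifting to $\mathbb{S}^3$.} Let $\pi:\mathbb{S}^3\to\mathbb{RP}^3$ be the projection. Pulling back along $\varphi$ gives a double cover $\widetilde\Sigma\to\Sigma$, possibly disconnected, together with a flat minimal immersion $\widetilde\varphi:\widetilde\Sigma\looparrowright\mathbb{S}^3$. By Lawson's result, every flat minimal surface in $\mathbb{S}^3$ is locally congruent to the Clifford torus $\mathbb{S}^1(1/\sqrt2)\times\mathbb{S}^1(1/\sqrt2)$. The developing-map argument then makes $\widetilde\varphi$ globally a covering of a Clifford torus $T$, since $\widetilde\Sigma$ is compact. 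Because $T$ is invariant under the antipodal map, $\varphi(\Sigma)=\pi(T)$, which is the projective Clifford torus $T/\{\pm1\}$. This image is itself a torus, because the antipodal map acts on $T$ as a translation $(z,w)\mapsto(-z,-w)$. Hence $\varphi$ factors through a local isometry $\Sigma\to T/\{\pm1\}$. A local isometry from a compact manifold is a finite covering, which gives (ii).

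For (i), the projective Clifford torus is two-sided in $\mathbb{RP}^3$. The normal $N(z,w)=(z,-w)/\sqrt2$ of $T$ satisfies $N(-p)=-N(p)$, so it descends to a well-defined normal field, as $d(-\mathrm{id})$ maps $N(p)$ to $-N(p)$. Since $\mathbb{RP}^3$ is orientable, a two-sided surface in it is orientable. Two-sidedness pulls back under the covering, so $\Sigma$ is orientable. Alternatively, a flat closed surface with $\chi=0$ is a torus or a Klein bottle, and pulling back the normal field rules out the Klein bottle.

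\textbf{Spectrum.} Since $|\sigma|^2\equiv2$, we have $J=\Delta+4$ on the flat torus $(\Sigma,g)$. Its eigenvalues are therefore $\lambda_k(J)=\mu_k-4$, where $\mu_k$ are the Laplace eigenvalues. In particular $\lambda_1=-4$, and $\lambda_2(J)\le0$ is equivalent to $\mu_2\le4$. The Clifford torus $T$ has coordinate functions that are eigenfunctions with eigenvalue $2$, since $T$ is minimal in $\mathbb{S}^3$. On the projective torus $T/\{\pm1\}$ the odd coordinates do not descend. The quadratic functions $z_1^2-z_2^2$, $z_1z_2$ and their analogues in $w$ are even and have eigenvalue $4$ on $T$, because on the circle of radius $1/\sqrt2$, $\cos2\theta$ has eigenvalue $4/(1/2)=8$. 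This needs rechecking: it is better to use the products $z_iw_j$. These are even, and their eigenvalue is $2+2=4$. They descend, so $\mu_2(T/\{\pm1\})\le4$. They also pull back to any finite covering $\Sigma$, where they remain eigenfunctions with eigenvalue $4$ and are orthogonal to constants. Hence $\mu_2(\Sigma)\le4$ and $\lambda_2(J)\le0$.

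\textbf{Main obstacle.} The hard step is the global part of (ii): passing from Lawson's local congruence to a genuine covering of the projective Clifford torus. This requires handling the lift carefully, including the case where $\widetilde\Sigma$ is disconnected, and applying the monodromy argument. The spectral step is then routine.
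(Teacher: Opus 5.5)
Your proposal is correct and is essentially the paper's argument. The shared steps are: principal curvatures $\pm1$ together with Codazzi and Lawson's classification on a lift to $\mathbb{S}^3$; the resulting local isometry onto the projective Clifford torus, which is a finite covering; and $J=\Delta+4$ with the eigenvalue-$4$ functions $z_iw_j$ pulled back to $\Sigma$ (these span exactly the paper's Fourier modes $e^{i(\pm s\pm t)}$). The only variations are that you lift through the compact pulled-back double cover instead of the universal cover $\mathbb{R}^2$, and that you get orientability from two-sidedness rather than from $\Sigma$ covering a torus.

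In a final write-up, drop the retracted claim about $z_1^2-z_2^2$ and $z_1z_2$, whose eigenvalue is $8$, not $4$. Also note that with $|z|=|w|=1/\sqrt2$ the unit normal is $(z,-w)$, without the factor $1/\sqrt2$; this is harmless, since only a nonvanishing antipodally invariant normal is needed.
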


\begin{proof}
Let $\pi:\mathbb{S}^3\to\mathbb{RP}^3$ be the antipodal covering, and let $p:\widetilde\Sigma\to\Sigma$ denote the universal covering of $\Sigma$. Since $\widetilde\Sigma$ is simply connected, the map $\varphi\circ p:\widetilde\Sigma\to\mathbb{RP}^3$ admits a lift
$$
\widetilde\varphi:\widetilde\Sigma\to\mathbb{S}^3
$$
such that 
$$
\pi\circ\widetilde\varphi=\varphi\circ p.
$$

Because $\pi$ is a local isometry, the lifted immersion $\widetilde\varphi$ is also flat and minimal. Let $\kappa_1,\kappa_2$ denote its principal curvatures. The minimality of $\widetilde\varphi$ and the Gauss equation give that
$$
\kappa_1+\kappa_2=2\widetilde H=0,\qquad 1+\kappa_1\kappa_2=\widetilde K=0.
$$
Hence, after choosing the unit normal appropriately, $\kappa_1=1$ and $\kappa_2=-1$. In particular, the principal curvatures are distinct and constant. The Codazzi equations imply that the connection form of a local principal orthonormal frame vanishes. Thus the principal directions are parallel. Equivalently, by Lawson's local classification \cite[Corollary~3]{Lawson1969}, the lifted immersion is locally congruent to an open subset of the Clifford torus.

Since $\Sigma$ is compact and flat, its universal Riemannian covering is complete, simply connected, and flat, and hence isometric to $\mathbb{R}^2$. The parallel principal frame therefore extends globally, and integration of the corresponding structure equations shows that, after an ambient isometry,
\begin{equation}\label{eq:universal-Clifford}
\widetilde\varphi(s,t)=\frac{1}{\sqrt{2}}\left(e^{is},e^{it}\right),\qquad(s,t)\in\mathbb{R}^2.
\end{equation}
So, the image of $\widetilde\varphi$ is the Clifford torus
$$
\mathbb{T}_{\mathrm{Cl}}=\left\{\frac{1}{\sqrt{2}}\left(e^{is},e^{it}\right):s,t\in\mathbb{R}\right\}\subset\mathbb{S}^3.
$$
The local classification invoked above is classical; see \cite[Corollary~3]{Lawson1969}; see also \cite{Lawson1970}.

The antipodal map preserves $\mathbb{T}_{\mathrm{Cl}}$, since $-\widetilde\varphi(s,t)=\widetilde\varphi(s+\pi,t+\pi)$. Therefore, its quotient
$$
\overline{\mathbb{T}}_{\mathrm{Cl}}=\pi\left(\mathbb{T}_{\mathrm{Cl}}\right)\subset\mathbb{RP}^3
$$
is a flat embedded torus, referred to as the projective Clifford torus. Since the image of $\varphi$ is contained in $\overline{\mathbb{T}}_{\mathrm{Cl}}$, the original immersion may be regarded as a local isometry $\varphi:\Sigma\to\overline{\mathbb{T}}_{\mathrm{Cl}}$. Its image is both open, because $\varphi$ is a local diffeomorphism, and compact, hence closed. Since $\overline{\mathbb{T}}_{\mathrm{Cl}}$ is connected, $\varphi$ is surjective. A surjective local isometry from the compact surface $\Sigma$ is a finite covering. Thus $\Sigma$ is a finite covering of a torus and is therefore itself a torus; in particular, it is orientable. This proves \textup{(i)} and \textup{(ii)}.

It remains to prove \textup{(iii)}. Since $K=H=0$, the Gauss equation gives $|\sigma|^2=2$. Therefore, 
$$
J=\Delta+4.
$$

We first compute the first positive Laplace eigenvalue of the projective Clifford torus. In the coordinate system \eqref{eq:universal-Clifford}, the induced metric is
$$
g_{\mathrm{Cl}}=\frac{1}{2}\left(ds^2+dt^2\right).
$$
Smooth functions on the projective quotient correspond to functions on $\mathbb{R}^2$ invariant under the translations
$$
(s,t)\longmapsto(s+2\pi,t),\qquad(s,t)\longmapsto(s,t+2\pi),\qquad(s,t)\longmapsto(s+\pi,t+\pi).
$$
Accordingly, the Fourier mode $e^{i(ms+nt)}$, $m,n\in\mathbb{Z}$, descends to the projective Clifford torus if and only if $e^{i\pi(m+n)}=1$, that is, $m+n\equiv0\pmod2$. Furthermore, it corresponds to the Laplace eigenvalue $\mu(m,n)=2(m^2+n^2)$. The smallest positive value of $\mu(m,n)$ for which $(m,n)$ satisfies the parity condition is attained for $(m,n)=(\pm1,\pm1)$, and is equal to $4$. Therefore $\mu_1(\mskip1mu\overline{\mathbb{T}}_{\mathrm{Cl}})=4$.

Since $\varphi:\Sigma\to\overline{\mathbb{T}}_{\mathrm{Cl}}$ is a finite Riemannian covering, the pullback of any Laplace eigenfunction on $\overline{\mathbb{T}}_{\mathrm{Cl}}$ with eigenvalue $4$ is a nonconstant eigenfunction on $\Sigma$ corresponding to the same eigenvalue. So, $\mu_1(\Sigma)\leq4$.

Thus the constant functions give the first eigenvalue $\lambda_1(J)=-4$, whereas the second eigenvalue is $\lambda_2(J)=\mu_1(\Sigma)-4\leq0$.
\end{proof}

\begin{proof}[Proof of Theorem~\ref{thm:main}]
In Section~\ref{sec:eigenvalue-estimate}, we prove inequality \eqref{eq:quantitative-main}, and hence $\lambda_2(J)\le2$ when $\chi(\Sigma)\le0$. Suppose that equality holds. The analysis in Section~\ref{sec:equality-case} produces a function $f$ satisfying \eqref{eq:obata}. If $f$ is nonconstant, Obata's theorem contradicts \eqref{eq:rho-curvature}. If $f$ is constant, the surface is flat and minimal, and Lemma~\ref{lem:flat-minimal} gives $\lambda_2(J)\le0$, again a contradiction. Therefore $\lambda_2(J)<2$.
\end{proof}

\begin{remark}
The one-sided case is already excluded in the constant branch of the argument: a closed nonorientable surface with Euler characteristic zero is a Klein bottle, whereas Lemma~\ref{lem:flat-minimal} shows that every closed flat minimal surface immersed in $\mathbb{RP}^3$ is orientable. Thus the proof does not require a separate spectral problem on a double cover.
\end{remark}

\section{Final comments}\label{sec:final-comments}

Observe that \eqref{eq:before-gauss} can be rewritten as
$$
\lambda_2(J)\Area(\Sigma,g)\le4\Area(\Sigma,g)-\int_\Sigma(|\sigma|^2-2H^2)d\Sigma.
$$
Since $|\sigma|^2\ge2H^2$, it follows that
$$
\lambda_2(J)\le4
$$
for every closed connected immersion $\varphi:\Sigma^2\looparrowright\mathbb{RP}^3$.

Very recently, the authors \cite{BatistaMendes} characterized the totally geodesic copies of $\mathbb{RP}^2$ in $\mathbb{RP}^3$ as the only closed immersed surfaces satisfying $\lambda_2(J)=4$. More generally, they established a sharp upper bound for the second eigenvalue of the Schrödinger operator $\Delta+|\sigma|^2+k$ on closed immersed $k$-dimensional submanifolds of the compact projective spaces $\mathbb{FP}^m$, $\mathbb{F}\in\{\mathbb{R},\mathbb{C},\mathbb{H}\}$, as well as of the Cayley projective plane $\mathbb{OP}^2$. They also showed that equality can occur only for totally umbilical submanifolds.

In the real projective case, they showed that if $\iota:\Sigma^k\to\mathbb{RP}^m$ is a closed connected immersion, then
$$
\lambda_2(\Delta+|\sigma|^2+k)\le k+2.
$$
Moreover, equality holds if and only if $\iota$ is an embedding onto a totally geodesic copy of $\mathbb{RP}^k$ in $\mathbb{RP}^m$; see \cite[Corollary~4.4]{BatistaMendes}.

For $k=2$, this shows that inequality \eqref{eq:quantitative-main},
$$
\lambda_2(\Delta+|\sigma|^2+2)\le2-\frac{2}{\Area(\Sigma,g)}\int_\Sigma H^2d\Sigma+\frac{4\pi\chi(\Sigma)}{\Area(\Sigma,g)},
$$
is attained by the totally geodesic embedding $\mathbb{RP}^2\subset\mathbb{RP}^3$. Indeed, since $\mathbb{RP}^2$ is minimal, $\chi(\mathbb{RP}^2)=1$, and $\Area(\mathbb{RP}^2)=2\pi$, the right-hand side of \eqref{eq:quantitative-main} equals $4$.

\section*{Funding}
The authors were partially supported by the Brazilian National Council for Scientific and Technological Development, Brazil [Grants: 402563/2023-9 and 304381/2026-8 to M.B.; 309867/2023-1 and 445723/2025-4 to A.M.], and were partially supported by Coordination for the Improvement of Higher Education Personnel [Finance code - 001].

\bibliographystyle{amsplain}
\bibliography{bibliography.bib}

\end{document}